\newtheorem{theorem}{Theorem}[section]
\newtheorem{lemma}[theorem]{Lemma}
\theoremstyle{definition}
\theoremstyle{remark}
\begin{document}

\begin{frontmatter}

\title{On topological obstructions to global stabilization\\ of an inverted pendulum}
%\address[label1]{Address One}
%\address[label2]{Address Two\fnref{label4}}

%\cortext[cor1]{I am corresponding author}
%\fntext[label3]{I also want to inform about\ldots}
%\fntext[label4]{Small city}

%\ead{ivanpolekhin@gmail.com}
%\ead[url]{author-one-homepage.com}

\author[label5]{Ivan Polekhin}
\address[label5]{Steklov Mathematical Institute of the Russian Academy of Sciences, Moscow, Russia}
%\ead{author.two@mail.com}

%\author[label1,label5]{Author Three}
%\ead{author.three@mail.com}

\begin{abstract}
We consider a classical problem of control of an inverted pendulum by means of a horizontal motion of its pivot point. We suppose that the control law can be non-autonomous and non-periodic w.r.t. the position of the pendulum. It is shown that global stabilization of the vertical upward position of the pendulum cannot be obtained for any Lipschitz control law, provided some natural assumptions. Moreover, we show that there always exists a solution separated from the vertical position and along which the pendulum never becomes horizontal. Hence, we also prove that global stabilization cannot be obtained in the system where the pendulum can impact the horizontal plane (for any mechanical model of impact). Similar results are presented for several analogous systems: a pendulum on a cart, a spherical pendulum, and a pendulum with an additional torque control.
\end{abstract}

\begin{keyword}
%% keywords here, in the form: keyword \sep keyword
stabilization of an inverted pendulum, pendulum on a cart, periodic solution, topological obstructions to stabilization, partial stability %pendulums, nonlinear lattice
%% MSC codes here, in the form: \MSC code \sep code
%% or \MSC[2008] code \sep code (2000 is the default)
\end{keyword}

\end{frontmatter}

\section{Introduction}

One and the same property of a system, considered in different contexts, can both be useful, and appear as an undesirable limitation: possible stability of an inverted pendulum to arbitrary horizontal movements of its pivot point \cite{polekhin2014examples,polekhin2015forced} turns out to be related to the impossibility of global stabilization of a given position or motion of the pendulum.

The problem of stabilization of the vertical upward position of an inverted pendulum (or of an inverted pendulum on a cart) by means of a horizontal motion on its pivot point (or by a horizontal force, correspondingly) is a well-known problem and has been considered by many authors (see, e.g., \cite{mori1976control, henders1992large, lin1992hybrid, chung1995nonlinear, ludvigsen1999stabilization, shiriaev1999global, shiriaev1999swinging, aastrom2000swinging, angeli2001almost, zhao2001hybrid, spong2001nonlinear, shiriaev2004swinging, chatterjee2002swing, gordillo2008new}). This is, among other things, due to the possible applications in real-life systems \cite{sugihara2002real,furuta2003control,kajita2003resolved,popovic2004angular,kajita2006biped}.

It was proved \cite{bhat2000topological} that if the configuration space of a control system has non-trivial topology, then the system cannot have a globally asymptotically stable equilibrium. To be more precise, if the configuration space is closed (compact without boundary), then global stabilization cannot be obtained. One can compare this result with the situation when relatively complex topology of the configuration space leads to non-integrability of a Hamiltonian system \cite{kozlov1983integrability}. For instance, since the configuration space of the spherical pendulum is $\mathbb{S}^2$, the problem of global stabilization of the controlled spherical pendulum can be solved only by means of a non-continuous control \cite{ludvigsen1999stabilization}.

For the system `pendulum on a cart' (its phase space is $\mathbb{S} \times \mathbb{R}^3$), it is also impossible to find such a continuous control that the system would have a globally asymptotically stable equilibrium position \cite{shiriaev1999global,auckly2001mathematical,bhat2000topological}. Even the problem of stabilization of the vertical position of a one-degree-of-freedom simple inverted pendulum does not allow continuous autonomous control which would asymptotically lead the pendulum to the vertical from any initial position. This follows from the fact that a continuous function on a circle, which takes values of opposite sign, has at least two zeros, i.e., the system has at least two equilibria (see system (\ref{main}) below).

The following questions naturally arise. First, do the above statements remain true if we consider the pendulum only in the positions where its mass point is above the pivot point (often there exists a physical constraint in the system which does not allow the rod to be below the plane of support and it is meaningless to consider the pendulum in such positions). Second, is it true that global stabilization cannot be obtained when the control law is a time-dependent function and it is also a non-periodic function of the position of the pendulum? For a relatively broad class of problems, which may appear in practice, we show that for the both questions the answers are positive.

The main results of the paper can be described in the following way. For all systems considered in the paper it was shown \cite{bhat2000topological} that they do not possess a globally asymptotically stable equilibrium and this result follows from the fact that a closed manifold cannot be contractible. At the same time, if we restrict our consideration to a contractible subset of the configuration space of the system, then there exists a vector field with a unique asymptotically stable equilibrium. However, due to limitations caused by the realization of the control mechanism, in real systems we cannot arbitrarily choose the right-hand side of the control system. In particular, we show that for the inverted pendulum there exists a contractible subset of the configuration space such that the vertical upward position belongs to this set, yet this equilibrium is never a global attractor. The existence of such a set is a consequence of our method of control --- we try to stabilize the rod by means of a horizontal motion of the pivot point.

To be more precise, we prove that there exists a solution that does not tend to the equilibrium and the rod never becomes horizontal along it. Note that this is a solution of the system without any additional constraints. Such systems have been considered previously by many authors (see, for instance, \cite{mori1976control, chung1995nonlinear, shiriaev1999swinging, aastrom2000swinging, angeli2001almost}). Let us now suppose that the pivot point of our pendulum is moving on a horizontal plane of support, i.e., the rod is constrained not to be below the horizon. The above mentioned solution still remains in the constrained system, regardless of the model of the rod-plane impact interaction. Therefore, we can claim that global stabilization cannot be obtained for the constrained system, possibly non-continuous.

The proofs are illustrative and based on the Wa\.{z}ewski method \cite{wazewski1947principe, reissig1963qualitative} and similar to the ones in \cite{polekhin2014examples,polekhin2015forced,polekhin2016forced}, where the following system has been studied. Let us consider an inverted pendulum in a gravitational field with its pivot point moving along a horizontal line according to a given law of motion. It was proved that, for an arbitrary smooth function, which describes the motion of the pivot point, there always exists a solution such that the pendulum never becomes horizontal along it (never falls). If the law of motion of the pivot point is periodic, then there exists a periodic solution without falling. We add that similar results can be obtained by means of the variational approach \cite{bolotin2015calculus}.

The paper contains two main sections. In one section we consider in detail the case of control of a simple inverted pendulum (system with one degree of freedom), in another section we consider the controlled spherical pendulum and the pendulum on a cart and also present results on the impossibility of global stabilization.

\section{Simple inverted pendulum}

Consider the following control system
\begin{equation}
\label{main}
\begin{aligned}
&\dot q = p,\\
&\dot p = u(q, p, t) \cdot \sin q - \cos q.
\end{aligned}
\end{equation}

Here and below by $u \in \mathrm{Lip}(\mathbb{R}^3, \mathbb{R})$ we denote the feedback control law for the system, defined by some function from $\mathbb{R}^3$ to $\mathbb{R}$ which is continuous and locally Lipschitz (i.e., Lipschitz for any compact $K \subset \mathbb{R}^3$) in all variables except $t$. System (\ref{main}) describes the motion of a pendulum when the acceleration of its pivot point is given by the function $u$. The coordinate is chosen so that $q = 0$ and $q = \pi$ correspond to the horizontal positions of the rod, $q = \pi/2$ corresponds to its vertical upward position. Without loss of generality, we assume that the mass of the pendulum, its length and the gravity acceleration equal $1$. Note that we do not assume that $u$ is periodic in $q$.

Suppose that we are looking for a control that would stabilize system (\ref{main}) in a vicinity of a certain equilibrium position in the following sense. Let $M$ be a subset of the phase space of the system such that the points of $M$ correspond to the positions of the pendulum in which its rod is above the horizontal line (in our case, $M = \{ 0 < q < \pi \}$) and $\mu \in M$ is the equilibrium for a given control $u$. We assume that the control function $u$ is chosen in such a way that there exists a closed subset $U \subset M$, $\mu \in U \setminus \partial U$ and a  $C^1$-function $V \colon U \to \mathbb{R})$ with the following properties
\begin{enumerate}
    \item[L1. ] $V(\mu) = 0$ and $V > 0$ in $U \setminus \mu$.
    \item[L2. ] Derivative $\dot V$ with respect to system (\ref{main}) is negative in $U \setminus \mu$ for all $t$.
\end{enumerate}
Since the function $V$ can be considered as a Lyapunov function for our system, the equilibrium $\mu$ is stable. If the following (stronger) condition holds
\begin{enumerate}
    \item[L3. ] $\dot V(x,t) \leqslant -W(x) < 0$ in $U \setminus \mu$ for all $t$ and $V(0,t) = W(0) = 0$, where $W \in C(U, \mathbb{R})$,
\end{enumerate}
then $\mu$ is asymptotically stable. For instance, such a function exists in the following case. Suppose that for a given $u$, system (\ref{main}) can be written as follows in a vicinity of $\mu$
\begin{equation*}
\dot x = Ax + f(x,t),
\end{equation*}
where $x = (q,p)$, $A$ is a constant matrix and its eigenvalues have negative real parts, $f$ is a continuous function and $f(t,x) = o(\| x \|)$ uniformly in $t$. Then there exists \cite{demidovich1967lectures} a function $V$ satisfying properties L1, L3.

We now show that in this case the control cannot be global. To be more precise, the following proposition holds

\begin{theorem}
\label{th21}
 Let $u(q,p,t) \in \mathrm{Lip}(\mathbb{R}^3, \mathbb{R})$ be a given control function, $\mu \in M$ be an equilibrium for system (\ref{main}) and $t_0 \in \mathbb{R}$. Suppose there exists a Lyapunov function $V$ satisfying L1 and L2, then there exists an initial condition $(q_0,p_0)$ for $t = t_0$ and an open neighborhood $B \subset M$ of $\mu$ such that, on the interval of existence, the solution $(q(t,q_0,p_0),p(t,q_0,p_0))$ remains in $M \setminus B$.
\end{theorem}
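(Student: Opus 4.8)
The plan is to apply the Wa\.{z}ewski retract method \cite{wazewski1947principe} in the extended phase space with coordinates $(q,p,t)$. Note first that an equilibrium $\mu$ of (\ref{main}) satisfies $\dot q=p=0$, so $\mu=(q_\mu,0)$ with $q_\mu\in(0,\pi)$. Since $\mu\in U\setminus\partial U$, choose a closed ball $B_0\ni\mu$ with $B_0\subset\mathrm{int}\,U\cap M$ (small enough that $B_0$ does not meet the lines $q=0$ and $q=\pi$), set $c:=\min_{\partial B_0}V>0$ (positive by L1) and let
\begin{equation*}
B:=\{x\in B_0:\ V(x)<c\}.
\end{equation*}
Then $B$ is an open neighbourhood of $\mu$ with $\overline B\subset\mathrm{int}\,U\cap M$ and $\partial B\subset\{V=c\}\subset U\setminus\{\mu\}$, so by L2, $\dot V<0$ on the compact set $\partial B$ for every $t$; hence $V$ strictly decreases along any solution that meets $\partial B$, and such a solution enters $B$.

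Now let $W$ be the part of the slab $\{0\le q\le\pi\}$ over the time interval $[t_0,\infty)$ obtained by deleting the tube $B\times[t_0,\infty)$ and, from the two faces $q=0$ and $q=\pi$, deleting those points whose velocity points strictly into the slab --- that is, keeping on $q=0$ only the points with $p\le0$ and on $q=\pi$ only the points with $p\ge0$. On $q=0$ the control term $u\sin q$ vanishes, so that, irrespective of $u$, $\dot p=u\sin 0-\cos 0=-1<0$; hence a solution meeting $\{q=0,\ p\le0\}$ leaves the slab at once if $p<0$ and tangentially, with $\ddot q=\dot p=-1<0$, if $p=0$; symmetrically, $\dot p=+1>0$ on $q=\pi$. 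Together with the previous paragraph this identifies the exit set of $W$,
\begin{equation*}
W^-=\bigl(\{q=0,\ p\le0\}\cup\{q=\pi,\ p\ge0\}\cup\partial B\bigr)\times[t_0,\infty),
\end{equation*}
shows that $W^-$ is relatively closed in $W$ and consists of strict egress points, and hence that $W$ is a Wa\.{z}ewski set. Observe that $W^-$ contains the two sets $\{q=0,p\le0\}$ and $\{q=\pi,p\ge0\}$, which lie in the parallel planes $q=0$ and $q=\pi$ and are therefore separated, so $W^-$ is disconnected; and directly from the construction, $W\setminus W^-\subseteq(M\setminus B)\times[t_0,\infty)$.

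Consider the connected component $C$ of the slice $W\cap\{t=t_0\}$ containing $\{q=0,\ p\le0\}\times\{t_0\}$; a path in the slice running around the small set $B$ at suitable values of $p$ shows that $C$ also contains $\{q=\pi,\ p\ge0\}\times\{t_0\}$. Suppose, for contradiction, that every solution starting on $C$ eventually leaves $W$. Then, by the Wa\.{z}ewski method, the first--exit map $r\colon C\to W^-$, sending a point to where its forward trajectory first meets $W^-$, is continuous and restricts to the identity on $C\cap W^-$; hence $r(C)$ is a connected subset of $W^-$ meeting both of the separated pieces $\{q=0,p\le0\}$ and $\{q=\pi,p\ge0\}$ --- impossible. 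Therefore some $(q_0,p_0)$ with $(q_0,p_0,t_0)\in C$ has a forward solution that never leaves $W$ on its interval of existence. Since the trimmed faces consist only of egress points, no solution that stays in $W$ can start on $q=0$ or $q=\pi$, so $(q_0,p_0)\in M\setminus B$; and on its whole interval of existence this solution stays in $W$ while avoiding the strict egress set $W^-$, hence lies in $W\setminus W^-\subseteq(M\setminus B)\times[t_0,\infty)$. Thus $(q(t,q_0,p_0),p(t,q_0,p_0))$ remains in $M\setminus B$ throughout its interval of existence, which is the assertion; in particular it stays at distance at least $\mathrm{dist}(\mu,\mathbb{R}^2\setminus B)>0$ from $\mu$.

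The main obstacle is the verification that $W$ is a genuine Wa\.{z}ewski set: that $W^-$ is relatively closed and that each of its points is a strict egress point --- including the tangency points $p=0$ on the faces and the whole of $\partial B$, where one uses that $\dot V$ is bounded away from $0$ on the compact set $\partial B$ --- and that the corresponding first--exit map is continuous, in particular across the seams where the trimmed faces meet the open slab. A secondary point requiring care, and the reason for trimming the faces $q=0$ and $q=\pi$ rather than keeping the full closed slab, is precisely to guarantee that the initial point furnished by the argument lies in the open set $M$ and not merely in its closure.
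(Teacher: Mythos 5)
Your proposal is correct and follows essentially the same route as the paper: the Wa\.{z}ewski method applied to the slab $\{0\leqslant q\leqslant\pi\}$ with a tube around $\mu$ removed, the exit set identified as the trimmed faces plus the boundary of the capturing neighbourhood, the key observation that $\ddot q=-1$ at $q=0,\ p=0$ (and $\ddot q=+1$ at $q=\pi,\ p=0$) independently of $u$ because the control enters through $\sin q$, and a contradiction from mapping a connected set continuously onto a set meeting several separated pieces; the paper differs only cosmetically, using the one-dimensional initial set $S\cup\gamma_1\cup\gamma_2$ instead of your full slice component $C$, and taking $B$ to be the disk bounded by the level circle $\{V=\varepsilon\}$ (via the homotopy-sphere structure of level sets) rather than a sublevel set. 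One small slip to repair: with $c=\min_{\partial B_0}V$ and $B=\{x\in B_0:\ V(x)<c\}$, the boundary $\partial B$ can touch $\partial B_0$ at minimizers of $V|_{\partial B_0}$, and at such points $\dot V<0$ only shows the solution enters $\{V<c\}$, not $B$ itself, so they need not be egress points of $W$; choosing instead any $\varepsilon\in(0,c)$ and $B=\{x\in B_0:\ V(x)<\varepsilon\}$ forces $\partial B\subset\mathrm{int}\,B_0\cap\{V=\varepsilon\}$ and makes your verification that every point of $\partial B$ is a strict egress point go through.
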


\begin{proof}
For any $C^1$ function $f$ from $\mathbb{R}^n$ to $\mathbb{R}$ such that $f > 0$ everywhere except one point (where $f = 0$), any level set $f = \varepsilon$, for small $\varepsilon > 0$, is a homotopy sphere \cite{wilson1967structure}, and hence a sphere $\mathbb{S}^{n-1}$.

\begin{figure}[h!]
  \centering
    \def\svgwidth{240 pt}
    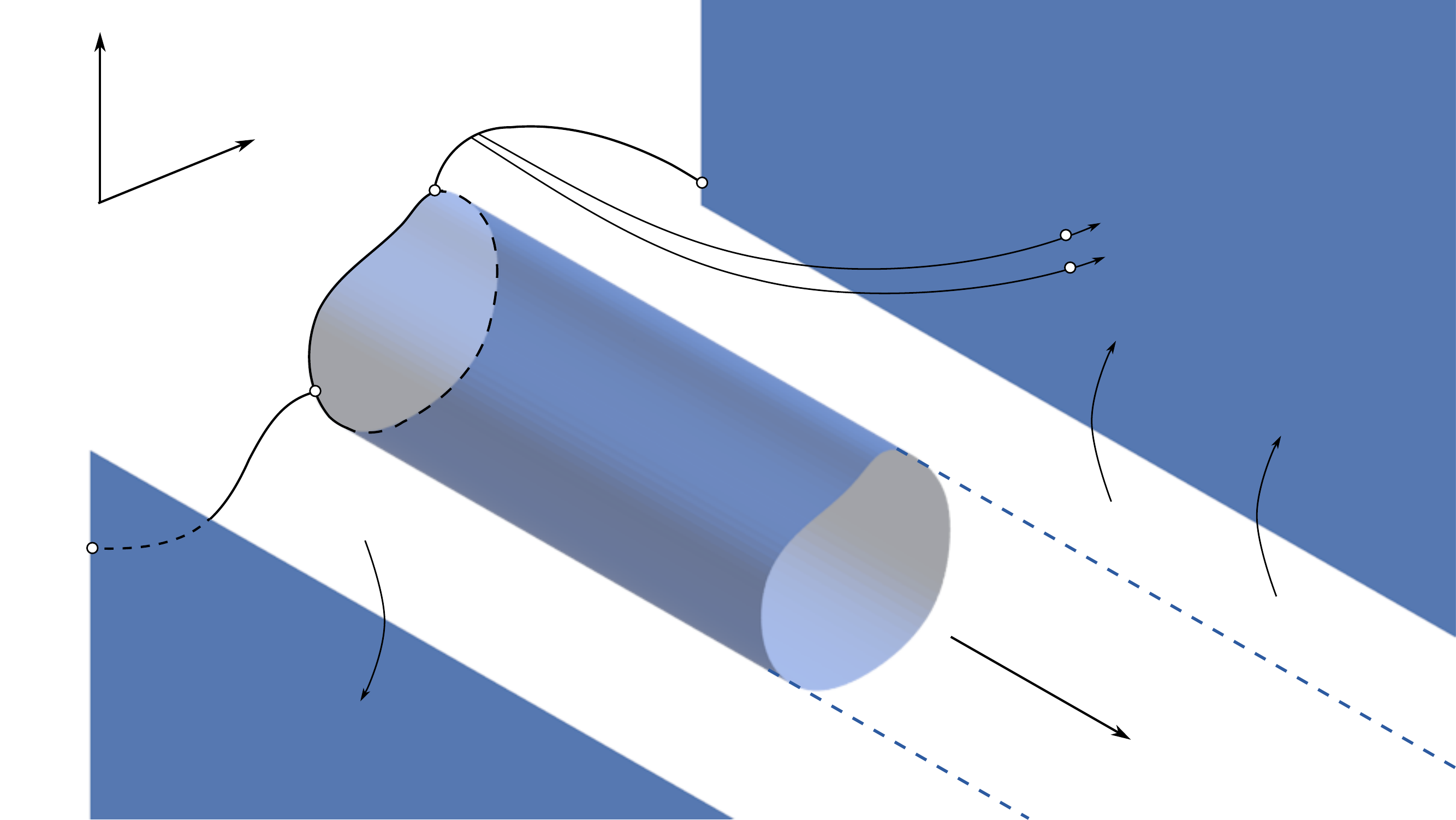
\caption{Exit sets for $(M \setminus B) \times \mathbb{R}^+$. Solutions are externally tangent to $M \times \mathbb{R}^+$ at the points where $p = 0$.}
\label{fig:fig}
\end{figure}

In our case, for small $\varepsilon > 0$, the set $V = \varepsilon$ is a circle (topologically) in the phase space. We shall denote it by $S$ and the corresponding ball by $B$.

Let us consider a curve $\gamma_1$ in the phase space which connects $S$ with the set $\{ q = 0, p < 0 \}$. Similarly, let $\gamma_2$ be a curve connecting
$S$ with the set $\{ q = \pi, p > 0 \}$ and $\gamma_1 \cap \gamma_2 = \varnothing$ (Fig.~1). Any solution starting in $M \setminus B$ at moment $t_0$ can leave the set $(M \setminus B) \times \mathbb{R}^+$ only through one of the following sets of the extended phase space: $S \times \mathbb{R}^+$, $\{ q = 0, p \leqslant 0 \} \times \mathbb{R}^+$ or $\{ q = \pi, p \geqslant 0 \} \times \mathbb{R}^+$. Here by $\mathbb{R}^+$ we denote the set $\{ t \geqslant t_0 \} \subset \mathbb{R}$.

Suppose that all solutions starting in $(S \cup \gamma_1 \cup \gamma_2) \times \{t_0\}$ leave $(M \setminus B) \times \mathbb{R}^+$. If it is true, then for every point $(q,p,t_0) \in (S \cup \gamma_1 \cup \gamma_2) \times \{t_0\}$ there is the point of first exit of the corresponding solution from $(M \setminus B) \times \mathbb{R}^+$. This point belongs to one of the above three sets (Fig.~1). Therefore, we have a map $\sigma$ from the set $(S \cup \gamma_1 \cup \gamma_2) \times \{t_0\}$ to the exit set of $(M \setminus B) \times \mathbb{R}^+$. Note that $\sigma = \mathrm{id}$ on $S \times \{t_0\} \cup (\gamma_1 \cap \{ q = 0, p < 0 \}) \times \{t_0\} \cup (\gamma_2 \cap \{ q = \pi, p > 0 \}) \times \{t_0\} $, i.e. for any point $(q_0, p_0, t_0)$ that belongs to this set, we have $\sigma(q_0, p_0, t_0) = (q_0, p_0, t_0)$. When $(q_0, p_0, t_0) \in S$, it follows from the definition of $S$. For the sets where $q = 0$, $p < 0$ and $q = \pi$, $p > 0$ it immediately follows from the first equation of system (\ref{main}).

Now we prove that $\sigma \colon (S \cup \gamma_1 \cup \gamma_2) \times \{t_0\} \to (S \cup \{ q = 0, p \leqslant 0 \} \cup \{ q = \pi, p \geqslant 0 \} ) \times \mathbb{R}^+$ is a continuous map. First, since the right-hand side of system~(\ref{main}) is Lipschitzian, then the theorem on continuous dependence of solutions on initial data holds. Let us now prove that for any $(q_0, p_0, t_0) \in (S \cup \{ q = 0, p < 0 \} \cup \{ q = \pi, p > 0 \} ) \times \mathbb{R}^+$ there exists $\delta > 0$ such that $(q(t_0 + t, q_0, p_0), p(t_0 + t, q_0, p_0), t_0 + t) \notin (M \setminus B) \times \mathbb{R}^+$. As above, it follows from the definition of $S$ and from equation $\dot q = p$ of the system. Finally, let us show that a solution starting at $\gamma_1$ or $\gamma_2$ cannot leave $(M \setminus B) \times \mathbb{R}^+$ through the points where $p = 0$. Consider the point $(q_0 = 0, p_0 = 0, t_0)$. For the solution starting at this point, we have $q(t_0) = 0$, $\dot q(t_0) = 0$ and $\ddot q(t_0) = -1$ and the corresponding solution is externally tangent to $(M \setminus B) \times \mathbb{R}^+$. Therefore, if our solution passes through the point where $q = 0$ and $p = 0$, then its trajectory is already outside $(M \setminus B) \times \mathbb{R}^+$. The case of the point $(q_0 = 0, p_0 = \pi, t_0)$ can be considered similarly. Therefore, it can be seen that if some solution leave $(M \setminus B) \times \mathbb{R}^+$, then all solutions close to the considered one also leave this set. Moreover, all these solutions leave $(M \setminus B) \times \mathbb{R}^+$ in close points (Fig.~1).

Finally, if our assumption that all solutions starting in $(S \cup \gamma_1 \cup \gamma_2) \times \{t_0\}$ leave $(M \setminus B) \times \mathbb{R}^+$ is true, then we obtain a continuous map between the connected set $S \cup \gamma_1 \cup \gamma_2$ and a disconnected set ($S$, $\gamma_1 \cap \partial M$ и $\gamma_2 \cap \partial M$). In order to construct such a function, we can consider compositions of $\sigma$ with the following maps: the continuous constant maps $\pi_1 \colon \{ q = 0, p \leqslant 0 \} \times \mathbb{R}^+ \to \{\gamma_1 \cap \partial M\} \times \{ t_0 \}$, $\pi_2 \colon \{ q = \pi, p \geqslant 0 \} \times \mathbb{R}^+ \to \{\gamma_2 \cap \partial M\} \times \{ t_0 \}$ and the canonical projection $\pi_3 \colon S \times \mathbb{R}^+ \to S$. The contradiction proves the proposition.

\end{proof}

From the proof it can be seen that we obtain not a single solution that does not leave the set $M \setminus B$, but a one-parameter family of such solutions. This family can be constructed by varying the paths $\gamma_1$ and $\gamma_2$ considered in the proof (Fig.~2).
\begin{figure}[h!]
  \centering
    \def\svgwidth{200 pt}
    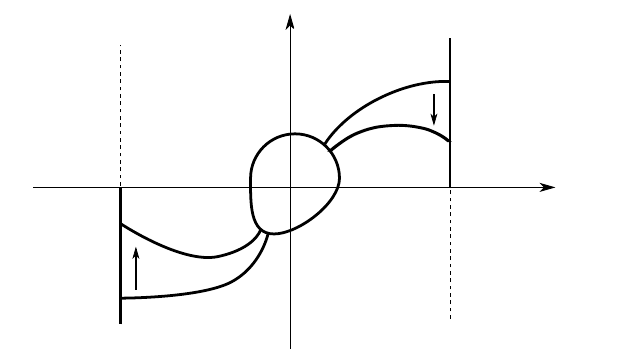
\caption{One-parameter family of initial conditions $S \cup \gamma_1 \cup \gamma_2$ can be obtained by varying $\gamma_1$ and $\gamma_2$. }
\label{fig:fig}
\end{figure}

We would like to note that Theorem~\ref{th21} is proved for the system where $q \in \mathbb{R}$, i.e., we do not impose any constraints on the position of the rod. What we obtain is that there exists a solution that is separated from the equilibrium $\mu$ and along this solution the pendulum never becomes horizontal. In particular, suppose now that our system is a control system with impacts, i.e., we allow the pendulum to fall on the horizontal plane. Since the constraint do not affect the non-stabilized solution, we can conclude that, for any mechanical model of the impact, it is impossible to globally stabilize the rod in a given position.

We add that the solutions can be continued for all $t$ if we assume that  $u(q,p,t)\leqslant a(t)|q| + b(t)|p| + c(t)$, for some continuous functions $a$, $b$ and $c$. For instance, if $u$ is bounded (this assumption is natural, since we always have some power limitations), then the solutions exist for all $t$.

Theorem~\ref{th21} still holds if we consider a more general system

\begin{equation}
\label{main2}
\begin{aligned}
&\dot q = p,\\
&\dot p = u(q, p, t) \cdot \sin q - \cos q + w(q,p,t),
\end{aligned}
\end{equation}

where $w \in \mathrm{Lip}(\mathbb{R}^3, \mathbb{R})$ and $w(0,0,t) < 1$, $w(\pi,0,t) > -1$. Therefore, the system cannot be globally stabilized in the above sense even when there is the control torque $w$ applied at the pivot point. It follows from the fact that, when $q = 0$ and $p = 0$, we have $\ddot q < 0$ and the trajectories of solutions are externally tangent to the set $0 \leqslant q \leqslant \pi$ in the considered points. Similarly, $\ddot q > 0$ for all $t$ when $q = \pi$ and $p = 0$.

Some qualitative properties of system (\ref{main}) can be proved without the assumption on the existence of the function $V$ satisfying L1 and L2. The following result can be proved in essentially the same way as Theorem~\ref{th21}.

\begin{theorem}
\label{th22}
 For any $u(q,p,t) \in \mathrm{Lip}(\mathbb{R}^3, \mathbb{R})$, there exists an initial condition $(q_0,p_0)$ for $t = t_0$ such that on the interval of existence the solution $(q(t,q_0,p_0),p(t,q_0,p_0))$ remains in $\{ 0 < q < \pi \}$.
\end{theorem}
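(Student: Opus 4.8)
The plan is to repeat the Waż\-ew\-ski-type argument used in the proof of Theorem~\ref{th21}, only in a simpler form: since we no longer insist that the trajectory avoid a neighbourhood of an equilibrium, no Lyapunov function is needed, the level circle $S$ and the ball $B$ disappear, and the whole set of initial conditions $S \cup \gamma_1 \cup \gamma_2$ can be replaced by a single arc joining the two components of the exit set. One also gets, as before, not a single solution but a one-parameter family, by letting this arc vary.

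First I would work in the extended phase space with $W = \{0 \leqslant q \leqslant \pi\} \times \mathbb{R}^+$, where $\mathbb{R}^+ = \{t \geqslant t_0\}$, and record the sign conditions on the boundary: from the second equation of (\ref{main}) we have $\dot p = -1$ whenever $q = 0$ and $\dot p = 1$ whenever $q = \pi$, for every $u$ and every $t$. Exactly as in the proof of Theorem~\ref{th21}, this shows that at the points with $q = 0,\ p = 0$ (resp. $q = \pi,\ p = 0$) the trajectories are externally tangent to $W$, so a solution that starts in $\{0 < q < \pi\}$ can leave $W$ only through $\{q = 0,\ p < 0\} \times \mathbb{R}^+$ or $\{q = \pi,\ p > 0\} \times \mathbb{R}^+$, and at the first-exit instant the exit is transversal, $\dot q = p \neq 0$. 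Indeed, if $q = 0,\ p = 0$ were a first-exit point of a solution started strictly inside, then $q(t)$ would have an interior minimum there while $\ddot q = -1 < 0$, which is impossible; the point $q = \pi,\ p = 0$ is handled symmetrically.

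Next I would fix an arc $\ell \subset \{0 \leqslant q \leqslant \pi\}$ whose interior lies in $\{0 < q < \pi\}$ and whose endpoints are $a = (0,-1)$ and $b = (\pi,1)$ --- for definiteness the straight segment from $a$ to $b$ --- and argue by contradiction: assume that every solution with initial data in $\ell \times \{t_0\}$ leaves $W$. Then each such solution has a well-defined point of first exit, giving a map $\sigma \colon \ell \times \{t_0\} \to (\{q = 0,\ p \leqslant 0\} \cup \{q = \pi,\ p \geqslant 0\}) \times \mathbb{R}^+$. As in the proof of Theorem~\ref{th21}, $\sigma$ is continuous: this follows from continuous dependence of solutions on initial data (the right-hand side of (\ref{main}) is locally Lipschitz) together with the transversality of all exits established above. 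Moreover $\sigma(a) = (a,t_0)$ and $\sigma(b) = (b,t_0)$, because the solutions issued from $a$ and $b$ leave $W$ immediately.

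Finally, composing $\sigma$ with the constant maps $\{q = 0,\ p \leqslant 0\} \times \mathbb{R}^+ \to \{a\}$ and $\{q = \pi,\ p \geqslant 0\} \times \mathbb{R}^+ \to \{b\}$ (these two pieces of the target are disjoint, so the composite is continuous) produces a retraction of the connected arc $\ell$ onto the disconnected pair $\{a,b\}$, which is impossible. Hence some initial condition $(q_0,p_0) \in \ell$ yields a solution that stays in $\{0 \leqslant q \leqslant \pi\}$ on its whole interval of existence; and such a solution cannot touch $q = 0$ or $q = \pi$, since that would force an interior extremum of $q(t)$ at which $\ddot q$ has the wrong sign, so it in fact remains in $\{0 < q < \pi\}$. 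The only delicate point, just as in Theorem~\ref{th21}, is the continuity of $\sigma$ near the boundary points with $p = 0$; it is precisely there that the external-tangency observation is used, and apart from it the argument is routine.
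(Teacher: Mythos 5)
Your proposal is correct and follows exactly the route the paper intends: the paper proves Theorem~\ref{th22} only by remarking that it goes ``in essentially the same way as Theorem~\ref{th21}'', and your argument is precisely that adaptation --- drop the Lyapunov level set $S$ and the ball $B$, join the two exit components $\{q=0,\,p<0\}$ and $\{q=\pi,\,p>0\}$ by a single arc, use the external tangency at $p=0$ (where $\ddot q=\mp1$) for continuity of the first-exit map, and derive the contradiction of a connected set retracting onto a disconnected one. The handling of the delicate points (transversality of actual exits, immediate exit at the arc's endpoints, and the final step showing the surviving solution stays in the open strip) matches the paper's reasoning.
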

In other words, for any control function, there always exists a solution along which the pendulum never falls.

Moreover, if we assume that $u$ is a bounded $T$-periodic function of $t$ and there is a viscous friction in the system, i.e. the dynamics is described by the following equations

\begin{equation}
\label{fric}
\begin{aligned}
&\dot q = p,\\
&\dot p = u(q, p, t) \cdot \sin q - \cos q - \nu p,
\end{aligned}
\end{equation}
then a result, similar to that for an inverted pendulum without control \cite{polekhin2014examples,polekhin2015forced}, holds.

We will say that $W \subset \mathbb{R}^3$ is a \textit{$T$-periodic segment} for (\ref{fric}) if $W = W_0 \times [0, T]$, where $W_0 \subset \mathbb{R}^2$ is a compact set. The point $(t_0, q_0, p_0) \in W$ is in the exit set $W^-$ if there exists $\delta > 0$ such that $(q(t_0 + t, q_0, p_0), p(t_0 + t, q_0, p_0),t_0 + t) \notin W$ for all $t \in (0, \delta)$. If $W^- = W^{-}_0 \times [0,T] \cup (W \cap \{ t = T \})$, where $W^{-}_0 \subset \mathbb{R}^2$, then $W$ is a simple $T$-periodic segment and by $W^{--} = W^{-}_0 \times [0,T]$ we will denote the essential exit set for $W$. In our case, the result of R.~Srzednicki~\cite{srzednicki1994periodic,srzednicki2005fixed} can be formulated in a simplified form as follows:

\begin{lemma}
\label{LemmaSr}
If there exists a simple $T$-periodic segment $W$ for (\ref{fric}), $W^{--}$ is compact and $\chi(W) - \chi(W^{--}) \ne 0$, then there exists a $T$-periodic solution $(q(t, q_0, p_0),p(t, q_0, p_0))$ such that $(q(t, q_0, p_0),p(t, q_0, p_0)) \in W_0 \setminus \partial W_0$ for all $t$.
\end{lemma}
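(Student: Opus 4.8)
The statement is R.~Srzednicki's periodic-segment theorem \cite{srzednicki1994periodic,srzednicki2005fixed} specialized to segments of product type $W = W_0 \times [0,T]$, and the plan is to recover it via the Poincar\'e map and the fixed-point index.

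First I would replace the time-periodic system (\ref{fric}) by its period map. Since $u$ is bounded and $T$-periodic in $t$ and the right-hand side of (\ref{fric}) is locally Lipschitz, solutions exist on $[0,T]$ (cf.\ the growth remark preceding the lemma) and the time-$T$ translation along trajectories defines a homeomorphism $\Phi$ of an open subset of $\mathbb{R}^2$ onto its image. The $T$-periodic solutions of (\ref{fric}) whose lifted orbit stays in $W$ correspond bijectively to fixed points of $\Phi$ with that property, so it suffices to find such a fixed point in $W_0 \setminus \partial W_0$.

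Next I would use that $W$ is a \emph{simple} segment, i.e.\ $W^- = W_0^- \times [0,T] \cup (W \cap \{t = T\})$ with essential exit set $W^{--} = W_0^- \times [0,T]$. This is exactly the data needed to define the monodromy map $m_W \colon (W_0, W_0^-) \to (W_0, W_0^-)$ of the segment (flow forward, collapsing the exit set); for a product segment $m_W$ is homotopic, as a map of pairs, to the identity, so $L(m_W) = \chi(W_0) - \chi(W_0^-)$. The core step is Srzednicki's formula $\mathrm{ind}(\Phi,\, W_0 \setminus W_0^-) = L(m_W)$. Since $[0,T]$ is contractible, $\chi(W) = \chi(W_0)$ and $\chi(W^{--}) = \chi(W_0^-)$, hence $\chi(W) - \chi(W^{--}) = L(m_W)$, which is nonzero by hypothesis; therefore $\Phi$ has a fixed point $x_0 \in W_0 \setminus W_0^-$. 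By the product structure of the exit set the forward orbit of $x_0$ cannot touch $W^-$ (it would leave $W$ through $W^{--}$ and, once outside, could not return to $W_0$ at $t = T$), so $(q(t),p(t)) \in W_0 \setminus \partial W_0$ for $t \in [0,T]$, and by $T$-periodicity of (\ref{fric}) for all $t$.

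The genuine obstacle --- the part I would quote from \cite{srzednicki1994periodic,srzednicki2005fixed} rather than reprove --- is the index identity $\mathrm{ind}(\Phi, W_0 \setminus W_0^-) = L(m_W)$ together with the homotopy triviality of $m_W$ for simple segments; this rests on a careful deformation of the local flow near the essential exit set and on the homotopy invariance and normalization of the fixed-point index. Everything else is routine: continuous dependence and global existence making $\Phi$ well defined, the correspondence between periodic solutions and invariant fixed points, and the Euler-characteristic bookkeeping. The only additional work, when the lemma is applied to (\ref{fric}), is to \emph{exhibit} a simple periodic segment $W$, which proceeds as in Theorems~\ref{th21} and~\ref{th22}: the trajectories are externally tangent to the strip $0 \leqslant q \leqslant \pi$ at the walls $q = 0$ and $q = \pi$ (there $\ddot q = -1 < 0$ and $\ddot q = 1 > 0$ for all $t$), which places those walls in the exit set.
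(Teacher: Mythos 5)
The paper offers no proof of this lemma at all: it is stated as a ``simplified form'' of Srzednicki's periodic-segment theorem and justified purely by citation to \cite{srzednicki1994periodic,srzednicki2005fixed}. Your sketch is therefore doing strictly more than the paper does, and it is a faithful outline of how the cited theorem is actually established: pass to the period map $\Phi$, use the simple (product) structure of the segment to identify the monodromy with the identity of the pair $(W_0, W_0^-)$, invoke Srzednicki's index identity to get $\mathrm{ind}(\Phi,\cdot)=L(m_W)=\chi(W_0)-\chi(W_0^-)=\chi(W)-\chi(W^{--})\ne 0$, and extract a fixed point whose orbit never meets the exit set. You also correctly isolate the one genuinely hard ingredient (the index identity plus the homotopy triviality of $m_W$) as the thing to quote rather than reprove, which is exactly the division of labor the paper implicitly makes. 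The only imprecision worth flagging: the fixed-point index in Srzednicki's theorem is computed not on $W_0\setminus W_0^-$ but on the open set $U\subset W_0$ of initial points whose trajectories remain in $W$ for all $t\in[0,T]$; it is the fixed point's membership in $U$ (not an after-the-fact argument about ``leaving through $W^{--}$ and not returning'') that guarantees the periodic orbit stays in $W_0\setminus\partial W_0$. Since you defer that identity to the references anyway, this is a matter of stating the quoted theorem accurately rather than a gap in the argument.
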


Here, as usual, by $\chi(W)$ and $\chi(W^{--})$ we denote the Euler-Poincar\'{e} characteristics.  Applying this lemma to the above system, we obtain

\begin{theorem}
\label{th23}
 For any bounded and $T$-periodic in $t$ function $u(q,p,t) \in \mathrm{Lip}(\mathbb{R}^3, \mathbb{R})$ and any $\nu > 0$, there exists an initial condition $(q_0,p_0)$ for $t = 0$ such that the solution $(q(t,q_0,p_0),p(t,q_0,p_0))$ of system (\ref{fric}) is $T$-periodic and remains in $\{ 0 < q < \pi \}$ for all $t$.
\end{theorem}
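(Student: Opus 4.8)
The plan is to deduce Theorem~\ref{th23} from Lemma~\ref{LemmaSr} by producing an explicit simple $T$-periodic segment, exactly in the spirit of the argument for the uncontrolled forced pendulum in \cite{polekhin2014examples,polekhin2015forced}. I would take $W = W_0 \times [0,T]$ with $W_0 = \{(q,p) : 0 \leqslant q \leqslant \pi,\ |p| \leqslant P\}$, where $P$ is a constant to be fixed using the boundedness of $u$. If $|u| \leqslant C$ everywhere, then on the horizontal edges one has $\dot p = u\sin q - \cos q \mp \nu P$, and choosing $P > (C+1)/\nu$ makes this quantity strictly negative on $\{p = P\}$ and strictly positive on $\{p = -P\}$, uniformly in $q$ and $t$. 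Hence no solution leaves $W$ through the edges $\{|p| = P\}$; they consist of entrance points. This is the step where $\nu > 0$ is essential: viscous friction is precisely what allows the segment to be taken compact in the $p$-direction despite the control being only bounded.

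Next I would read off the exit set. On $\{q = 0\}$ the $\sin q$ term annihilates $u$, so the second equation becomes $\dot p = -1 - \nu p$, together with $\dot q = p$: for $p < 0$ the solution immediately enters $q < 0$, and at the tangency point $p = 0$ we have $\dot q = 0$, $\ddot q = \dot p = -1 < 0$, so the solution is externally tangent to $\{q \geqslant 0\}$ and again leaves $W$ at once, while for $p > 0$ the solution enters the interior. Symmetrically, on $\{q = \pi\}$ one gets $\dot p = 1 - \nu p$, so $\{q = \pi,\ 0 \leqslant p \leqslant P\}$ exits and the part with $p < 0$ is entrance. All of these sign conditions are independent of $t$ (those at $q = 0,\pi$ do not involve $u$ at all, those at $|p| = P$ hold uniformly), and since the right-hand side of~(\ref{fric}) is Lipschitz and $T$-periodic, $W$ is a simple $T$-periodic segment with $W^- = W_0^- \times [0,T] \cup (W \cap \{t = T\})$, where $W_0^- = \{q = 0,\ -P \leqslant p \leqslant 0\} \cup \{q = \pi,\ 0 \leqslant p \leqslant P\}$; in particular $W^{--} = W_0^- \times [0,T]$ is compact.

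Finally I would compute the Euler–Poincar\'e characteristics: $W$ is contractible, so $\chi(W) = 1$, whereas $W_0^-$ is a disjoint union of two arcs, so $\chi(W^{--}) = \chi(W_0^-) = 2$, giving $\chi(W) - \chi(W^{--}) = -1 \neq 0$. Lemma~\ref{LemmaSr} then provides a $T$-periodic solution remaining in $W_0 \setminus \partial W_0 \subset \{0 < q < \pi\}$ for all $t$, which is the assertion; global existence of solutions inside $W$ is automatic from the inward-pointing field on $\{|p| = P\}$ and boundedness of $u$. The routine parts are the standard verification that a compact block with a Lipschitz $T$-periodic vector field meets Srzednicki's regularity requirements for the exit set; the one genuinely delicate point is the external tangency at $(q,p) = (0,0)$ and $(\pi,0)$, which is exactly what keeps those points out of the entrance set and so makes $W_0^-$ the honest pair of arcs (with $\chi = 2$) rather than a set of a different homotopy type that would spoil the count.
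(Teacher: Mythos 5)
Your proposal is correct and follows essentially the same route as the paper: the same rectangular simple $T$-periodic segment $W = [0,\pi]\times[-P,P]\times[0,T]$ with $P$ large (using boundedness of $u$ and $\nu>0$ to seal the edges $|p|=P$), the same identification of $W^{--}$ as the two strips at $q=0,\ p\leqslant 0$ and $q=\pi,\ p\geqslant 0$, and the same count $\chi(W)-\chi(W^{--}) = 1-2 = -1 \neq 0$ feeding into Lemma~\ref{LemmaSr}. Your write-up is in fact more explicit than the paper's on the choice of $P$ and the tangency analysis at $(0,0)$ and $(\pi,0)$, and it correctly states the Euler characteristic condition where the paper's text contains an evident typo ($\ne -1$ instead of $= -1 \ne 0$).
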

\begin{proof}

Let us show that for the simple $T$-periodic segment
\begin{equation*}
W = \{ q,p,t \colon q \in [0, \pi], p \in [-\rho, \rho], t \in [0,T] \},
\end{equation*}
where $\rho > 0$ is large, its essential exit set is of the following form (Fig.~3)
\begin{equation*}
\begin{aligned}
W^{--} = &\{ q,p,t \colon q = 0, p \in [-p,0], t \in [0,T] \} \cup \\ &\{ q,p,t \colon q = \pi, p \in [0,p], t \in [0,T] \}.
\end{aligned}
\end{equation*}
\begin{figure}[h!]
  \centering
    \def\svgwidth{160 pt}
    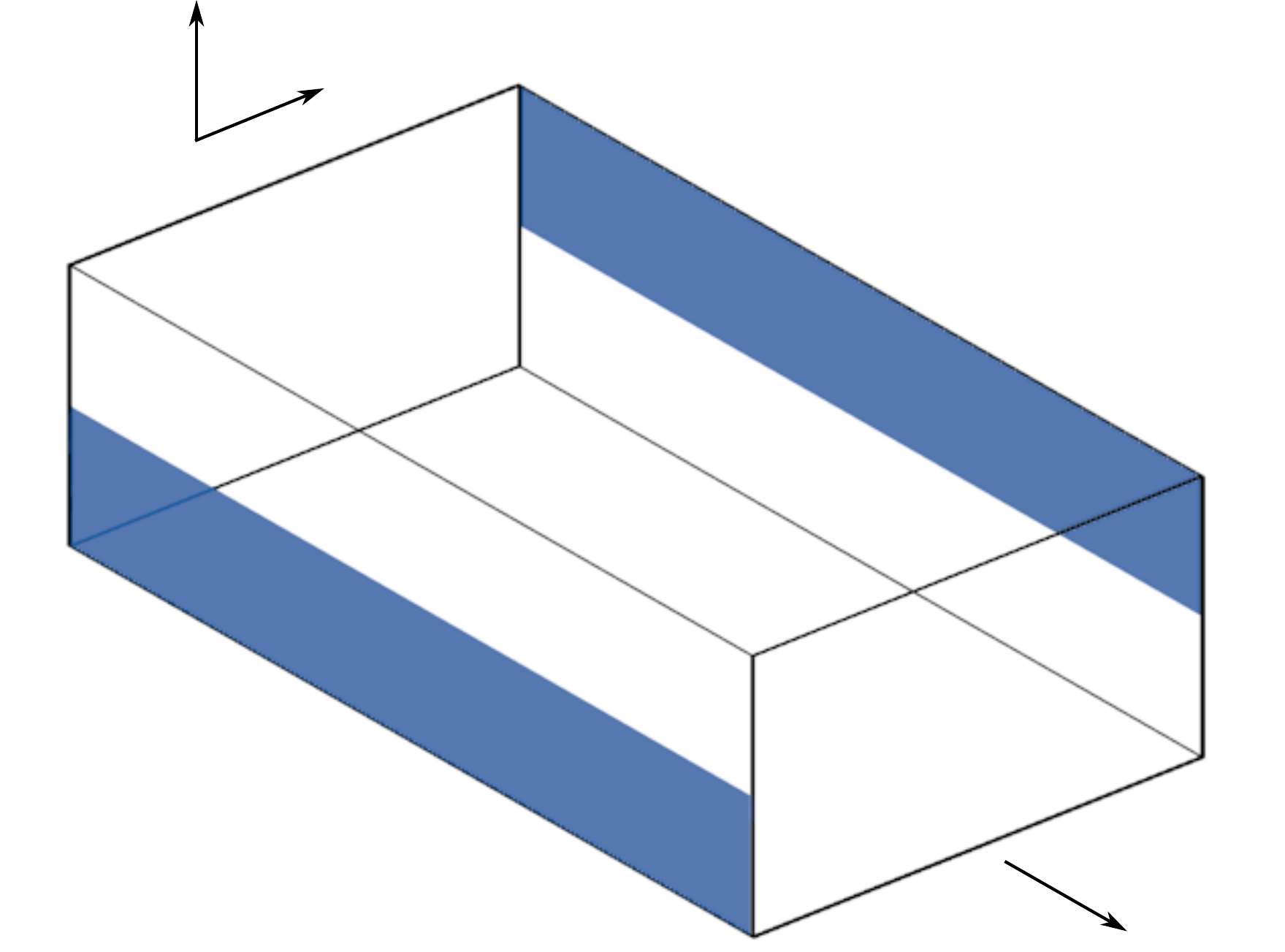
\caption{Sets $W$ and $W^{--}$ (highlighted). }
\label{fig:fig}
\end{figure}
Indeed, if $\rho > 0$ is large, then $\dot p < 0$ for all points where $p = \rho$. Therefore, solutions cannot leave $W$ through the top face of $W$. Similarly, we have $\dot p > 0$ for all points where $p = -\rho$. Finally, we see that $W^{--}$ have the above form and compact. Since $\chi(W) - \chi(W^{--}) \ne -1$, then from Lemma~\ref{LemmaSr} we obtain the existence of a periodic solution.

\end{proof}

From Theorems~\ref{th22} and \ref{th23}, we can also conclude that the problem of global stabilization of the pendulum in a position `below the horizon' cannot be solved by means of a Lipschitz control function $u$.

\section{Systems with two degrees of freedom}
The arguments of the previous section can be carried over to various similar systems. For instance, let us consider the following equations of motion of a controlled inverted spherical pendulum.

\begin{equation}
\label{sphere}
\begin{aligned}
&\ddot \varphi + u \sin\varphi\cos\theta + v\sin\varphi\sin\theta + \dot\theta^2\cos\varphi\sin\varphi = -\cos\varphi,\\
&\ddot \theta \cos^2\varphi - \dot\theta\dot\varphi\sin\varphi\cos\varphi + u \sin\theta\cos\varphi - v\cos\varphi\cos\theta = 0.
\end{aligned}
\end{equation}

Here $\varphi \in (-\pi/2, \pi/2)$ is the inclination angle of the rod, $\theta$ is the azimuth angle. Functions $u, v \in \mathrm{Lip}(\mathbb{R}^5,\mathbb{R})$, $u = u(\varphi, \dot \varphi, \theta, \dot \theta, t)$, $v = v(\varphi, \dot \varphi, \theta, \dot \theta, t)$ are the control accelerations of the pivot point (projections of the acceleration on fixed axes in the horizontal plane). We use the same assumptions concerning the mass and the length of the pendulum and the gravity acceleration as in the previous section.

The configuration space of the system is a two-dimensional sphere. Let $M$ be a subset of the phase space such that the points of $M$ correspond to the positions of the pendulum in which its rod is above the horizontal plane ($\varphi = 0$). Let us suppose that the control functions $u,v$ are chosen in such a way that $\mu \in M$ is an equilibrium of system (\ref{sphere}) and, in a vicinity of $\mu$, there exists a Lyapunov function satisfying L1 and L2. Then global stabilization cannot be achieved for the system. To be more precise, the following holds.

\begin{theorem}
\label{th-shpere}
 Let $u, v \in \mathrm{Lip}(\mathbb{R}^5, \mathbb{R})$ be given control functions, $\mu \in M$ be an equilibrium for system (\ref{sphere}) and $t_0 \in \mathbb{R}$. Suppose there exists a Lyapunov function $V$ satisfying L1 and L2, then there exists an initial condition $(\varphi_0, \dot \varphi_0, \theta_0, \dot\theta_0)$ for $t = t_0$ and an open neighborhood $B \subset M$ of $\mu$ such that on the interval of existence the solution $(q(t,q_0,p_0),p(t,q_0,p_0))$ remains in $M \setminus B$.
\end{theorem}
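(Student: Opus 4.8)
The plan is to repeat, essentially verbatim, the Wa\.{z}ewski-type argument of the proof of Theorem~\ref{th21}, the only change being that the role of the circle $\{V=\varepsilon\}$ is now played by a homotopy $3$-sphere. First I would record the topology of the relevant region. The positions of the rod above the horizontal plane form an open $2$-disk, so $\overline M$ is homeomorphic to a closed $2$-disk times $\mathbb{R}^2$ (hence is contractible), with topological boundary $\partial M=\{\varphi=0\}$ homeomorphic to $\mathbb{S}^1\times\mathbb{R}^2$; in particular $M\setminus B$ is path-connected. By~\cite{wilson1967structure}, for small $\varepsilon>0$ the level set $S=\{V=\varepsilon\}$ is a homotopy sphere, hence (topologically) $\mathbb{S}^3$; by L1 it bounds a topological ball $B$ with $\mu$ in its interior, $B\subset M$, and by L2 the flow of~(\ref{sphere}) crosses $S$ strictly into $B$. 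Consequently, in the extended phase space, $S\times\mathbb{R}^+$ (where $\mathbb{R}^+=\{t\geqslant t_0\}$) is part of the exit set of $(M\setminus B)\times\mathbb{R}^+$.

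Next I would describe the behaviour of solutions on $\partial M$. Setting $\varphi=0$ in the first equation of~(\ref{sphere}) annihilates the terms carrying $u$, $v$ and $\dot\theta^2$ — all of them multiplied by $\sin\varphi$ — so that $\ddot\varphi=-\cos\varphi=-1<0$ there, for every $\theta,\dot\theta,t$ and every admissible control. Hence a solution that reaches $\{\varphi=0\}$ from inside $M$ does so with $\dot\varphi\leqslant0$ and leaves at once, and at the tangency locus $\{\varphi=0,\ \dot\varphi=0\}$ it is externally tangent to $\overline M$. Exactly as in the proof of Theorem~\ref{th21}, this shows that the immediate exit set of $(M\setminus B)\times\mathbb{R}^+$ equals
$$W^-=\bigl(S\times\mathbb{R}^+\bigr)\ \cup\ \bigl(\{\varphi=0,\ \dot\varphi\leqslant0\}\times\mathbb{R}^+\bigr),$$
that $W^-$ is closed, and that the first-exit map $\sigma$ is well defined and continuous up to the tangency locus, with $\sigma=\mathrm{id}$ on $W^-$. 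The essential observation is that $W^-$ has two disjoint connected components, $S\times\mathbb{R}^+\subset\{\varphi>0\}$ and $\{\varphi=0,\ \dot\varphi\leqslant0\}\times\mathbb{R}^+\subset\{\varphi=0\}$, each of which is connected.

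Finally I would argue by contradiction as before. Suppose every solution starting in $(M\setminus B)\times\{t_0\}$ leaves $(M\setminus B)\times\mathbb{R}^+$ on its interval of existence; then $\sigma$ is defined on all of $(M\setminus B)\times\{t_0\}$. Choose a path $\gamma\subset M\setminus B$ joining a point of $S$ to a point of $\{\varphi=0,\ \dot\varphi<0\}$ and put $K=S\cup\gamma$, a connected set. Composing $\sigma|_{K\times\{t_0\}}$ with the projection $S\times\mathbb{R}^+\to S$ and with the constant map of $\{\varphi=0,\ \dot\varphi\leqslant0\}\times\mathbb{R}^+$ onto the chosen endpoint of $\gamma$ yields a continuous map from the connected set $K$ onto the disconnected set $S\sqcup\{\mathrm{pt}\}$ that restricts to the identity there — impossible, since a retract of a connected space is connected. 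Therefore some solution starting in $(M\setminus B)\times\{t_0\}$ never leaves $(M\setminus B)\times\mathbb{R}^+$; by the sign of $\ddot\varphi$ on $\{\varphi=0\}$ it never meets $\partial M$, and since $\dot V<0$ on $S$ it never meets $S$, so it stays in $M\setminus B$, which is the assertion. The main obstacle is not topological — the argument is the same $\pi_0$ (connectedness) obstruction as for Theorem~\ref{th21}, merely with $S$ a $3$-sphere instead of a circle — but analytic: one must check carefully that $W^-$ is closed and that $\sigma$ remains continuous across the externally tangent locus $\{\varphi=0,\ \dot\varphi=0\}$ (and, to have the solution defined for all $t\geqslant t_0$, impose a growth bound on $u$ and $v$ of the type appearing in the remark after Theorem~\ref{th21}).
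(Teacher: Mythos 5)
Your proposal is correct and follows essentially the same route as the paper: the level set $S=\{V=\varepsilon\}$ is a topological $\mathbb{S}^3$, a single curve $\gamma$ joins $S$ to $\{\varphi=0,\ \dot\varphi\leqslant 0\}$, the identity $\ddot\varphi=-\cos\varphi=-1$ at $\varphi=0$ (all control terms carry a factor $\sin\varphi$) gives external tangency and hence continuity of the first-exit map, and the connected-to-disconnected contradiction concludes. The paper's own proof is just a terser statement of exactly this reduction to the argument of Theorem~\ref{th21}.
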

\begin{proof}
% Основная идея доказательства полностью аналогична случаю Теоремы 2.1. Единственным отличием от предыдущего случая является то, что сферу $S$ (множество уровня $V = \varepsilon$ при малых $\varepsilon > 0$) достаточно соединить одним путем $\gamma$ с множеством $\{\varphi = 0, \dot \varphi \leqslant 0 \}$.
The main idea of the proof is similar to the one in Theorem~\ref{th21}. The only difference is that it is sufficient to connect the sphere $S$ (level set $V = \varepsilon$ for small $\varepsilon > 0$) by one curve $\gamma$ with the set $\{\varphi = 0, \dot \varphi \leqslant 0 \}$.

Now, if we assume that all solutions starting in $\gamma \times \{ t_0 \}$ leave $M \setminus B$, then we obtain a continuous map between a connected set and its two-point boundary ($\gamma \cap S$ and $\gamma \cap \{\varphi = 0, \dot \varphi \leqslant 0 \}$).

The continuity of the corresponding map follows, as in Theorem~\ref{th21}, from the fact that if we put the rod in the horizontal position and $\dot\varphi = 0$, then for any control functions, the pendulum will move to the region where $\varphi < 0$. In other words, if some solution leaves $M \setminus B$, then all close solution also leave this set.\end{proof}

From the proof it can also be seen that we obtain not a single solution, that does not leave the set $M \setminus B$, but a three-parameter family of such solutions: two points in a four-dimensional space can be connected by a three-dimensional family of paths.

The most important generalization of the system considered in the previous section is the controlled system of a pendulum on a cart, which is more correct from the physical point of view than its limiting case, the simple controlled inverted pendulum.

The equations of motion of a pendulum on a cart have the following form
\begin{equation}
\label{cart}
\begin{aligned}
&\dot q = p,\\
&\dot p  = \frac{ u(q,p,x,y,t)\sin q  + p^2\sin q \cos q  - (1+m)\cos q}{m + \cos^2 q},\\
&\dot x = y,\\
&\dot y = (m + \cos^2 q)^{-1}\left( u(q,p,x,y,t) + p^2\cos q - \sin q \cos q \right).
\end{aligned}
\end{equation}

Here $m > 0$ is the mass of the cart, $x$ is the coordinate of the pivot point on the horizontal line, $u \in \mathrm{Lip}(\mathbb{R}^5,\mathbb{R})$ is the horizontal force applied to the cart. We assume that the mass of the pendulum, its length and the gravity acceleration equal $1$.

Note that if the control $u$ does not depend on the position and velocity of the pivot point ($x$ and $y$, correspondingly), then the first two equations can be considered separately. Nonetheless, we will consider the general case, when the control function $u$ is non-autonomous and may depend on the total angular distance covered by the rod $q$ (again, we do not assume that $u$ is periodic in $q$), on the angular velocity of the rod $p$ and on the variables $x$ and $y$, defined above.

\begin{theorem}
\label{th-cart}
 Let $u\in \mathrm{Lip}(\mathbb{R}^5, \mathbb{R})$ be a given control function, $M = \{ 0 < q < \pi \}$,  $\mu \in M$ be an equilibrium for system (\ref{cart})  and $t_0 \in \mathbb{R}$. Suppose there exists a Lyapunov function $V$ satisfying L1 and L2, then there exists an initial condition  $(q_0,p_0,x_0,y_0)$ for $t = t_0$ and an open neighborhood $B \subset M$ of $\mu$ such that the solution starting at $(q_0,p_0,x_0,y_0)$ remains in $M \setminus B$ on the interval of existence.
\end{theorem}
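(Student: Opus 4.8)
The plan is to repeat, almost verbatim, the Wa\.{z}ewski-type retract argument of Theorem~\ref{th21}, now carried out in the four-dimensional extended phase space of system~(\ref{cart}) with coordinates $(q,p,x,y)$ and time $t$. First I would shrink the set $U$ so that $\{V\leqslant\varepsilon\}\subset U$ for a small $\varepsilon>0$; by property L1 and \cite{wilson1967structure} the level set $S=\{V=\varepsilon\}$ is a homotopy sphere, hence a connected set bounding a contractible ``ball'' $B$ with $\mu\in B\setminus\partial B$ and $\overline{B}\subset M$. Put $W=(M\setminus B)\times\mathbb{R}^+$, where $\mathbb{R}^+=\{t\geqslant t_0\}$, and argue by contradiction: suppose no solution starting at time $t_0$ remains in $M\setminus B$ on its interval of existence.

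The next step is to describe the exit set of $W$. A solution can escape $W$ only through $S\times\mathbb{R}^+$, or through $\{q=0,\ p\leqslant0\}\times\mathbb{R}^+$, or through $\{q=\pi,\ p\geqslant0\}\times\mathbb{R}^+$; on $S$ one has $\dot V<0$ by L2, so solutions cross $S$ transversally into $B$. The crucial observation --- the exact analogue of the external tangency exploited in Theorem~\ref{th21} --- is that the motion of $q$ on the faces $q=0$ and $q=\pi$ is insensitive to the control, because the term containing $u$ in the second equation of~(\ref{cart}) carries a factor $\sin q$. Plugging $q=0$ into that equation gives $\dot p=-(1+m)/(m+1)=-1<0$, and plugging $q=\pi$ gives $\dot p=+1>0$; hence any solution that reaches a corner point $q=0,\ p=0$ or $q=\pi,\ p=0$ is already outside $\{0\leqslant q\leqslant\pi\}$, and the extra cart variables $x,y$ play no role in this.

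Then I would run the topological step exactly as in Theorem~\ref{th21}. Pick two disjoint arcs $\gamma_1,\gamma_2$ contained in $M\setminus B$, joining $S$ to $\{q=0,p<0\}$ and to $\{q=\pi,p>0\}$ respectively, and meeting $\partial M$ only at their far endpoints, so that $S\cup\gamma_1\cup\gamma_2$ is connected. Under the contradiction hypothesis every solution issued from $(S\cup\gamma_1\cup\gamma_2)\times\{t_0\}$ leaves $W$, so the first-exit map $\sigma$ from $(S\cup\gamma_1\cup\gamma_2)\times\{t_0\}$ to the exit set of $W$ is well defined; it equals the identity on $S\times\{t_0\}$ (where $\dot V<0$) and on the two endpoints on $\partial M$ (where $\dot q=p\neq0$ forces an immediate exit), and it is continuous --- away from the corners by the Lipschitz property of the right-hand side of~(\ref{cart}) together with continuous dependence on initial data, and at the corners $p=0$ by the sign computation above, which shows that a solution cannot exit through a corner while staying in $W$. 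Composing $\sigma$ with the two constant maps $\pi_1\colon\{q=0,p\leqslant0\}\times\mathbb{R}^+\to\{\gamma_1\cap\partial M\}$, $\pi_2\colon\{q=\pi,p\geqslant0\}\times\mathbb{R}^+\to\{\gamma_2\cap\partial M\}$ and with the canonical projection $\pi_3\colon S\times\mathbb{R}^+\to S$ yields a continuous map from the connected set $S\cup\gamma_1\cup\gamma_2$ onto the disconnected set $S\sqcup\{\gamma_1\cap\partial M\}\sqcup\{\gamma_2\cap\partial M\}$ which restricts to the identity on $S$ --- a contradiction. Hence some solution stays in $W$, i.e. in $M\setminus B$, throughout its interval of existence; varying $\gamma_1$ and $\gamma_2$ produces a whole family of such solutions.

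The step that requires the most care is the continuity of $\sigma$, which is the only place where the new, unbounded directions $x,y$ (and the possibility of finite-time blow-up of $u$) could interfere; they do not, because the faces $q=0$ and $q=\pi$ are crossed transversally in $q$ with a sign that does not involve $u$, so wherever the first-exit time is finite it depends continuously on the initial data, and the conclusion is only asserted on the maximal interval of existence --- a solution that ceases to exist (for instance, escapes to infinity in $x,y$) before leaving $M\setminus B$ is itself one of the solutions we seek. If, in addition, $u$ is assumed bounded, the non-stabilised solution exists for all $t$, and, since the constraint $q\geqslant0$ does not affect it, global stabilisation of the pendulum on a cart is impossible for any mechanical model of the rod--plane impact as well.
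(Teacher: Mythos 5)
Your proposal is correct and follows essentially the same route as the paper: the Wa\.{z}ewski retract argument of Theorem~\ref{th21} transplanted to the four-dimensional phase space, with the sphere $S=\{V=\varepsilon\}$ joined by two disjoint arcs to the faces $\{q=0,\,p<0\}$ and $\{q=\pi,\,p>0\}$, and continuity of the first-exit map secured by the sign of $\dot p$ at the corners ($\dot p=-1$ at $q=0$, $\dot p=+1$ at $q=\pi$, independent of $u$ because of the factor $\sin q$). Your write-up is in fact more explicit than the paper's own proof on the corner computation and on why finite-time blow-up in $x,y$ is harmless.
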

\begin{proof}
From the existence of function $V$ satisfying L1 and L2, we obtain that there exists a ball $B$ such that $\mu \in B$ and $S = \partial B$ is a topological sphere $\mathbb{S}^{3}$. Moreover, any solution starting at $S$ at moment $t_0$ locally leaves $(M \setminus B) \times \mathbb{R}^+$.

Similarly to Theorem~\ref{th21}, we can connect $S$ with the three-dimensional subsets of the phase space $\{ q = 0, p < 0 \}$ and $\{ q = \pi, p > 0 \}$ by non-intersecting curves $\gamma_1$ and $\gamma_2$. Now suppose that all the solutions starting at $(S \cup \gamma_1 \cup \gamma_2) \times \{t_0\}$ leave $(M \setminus B) \times \mathbb{R}^+$.

In order to apply the Wa\.{z}ewski method and show that it is not the case, we have to prove that the corresponding map $\sigma$, that maps $(S \cup \gamma_1 \cup \gamma_2) \times \{t_0\}$ to the boundary of $(M \setminus B) \times \mathbb{R}^+$, is continuous. The key observation is that, like in the case of simple inverted pendulum, $\ddot q > 0$ for all points where $q = \pi$, $p = 0$ and $\ddot q < 0$ for $q = 0$, $p = 0$. Therefore, $\sigma$ is continuous and can be used to construct a continuous map between a connected and a disconnected set. This contradiction completes the proof. \end{proof}

\section{Conclusion and discussion}

 In this section we would like to discuss briefly the most restrictive part of the presented results, the question of existence of the Lyapunov function. As it can be easily seen, everywhere above we have never fully used the fact that $V$ is a Lyapunov function satisfying L1 and L2 (even though, in real-life applications, it is quite natural to assume the existence of such a function). Indeed, what we use is that there exists a `capturing neighborhood' of some point.

If we omit the requirement concerning the existence of a Lyapunov function and do not require the stability in all variables, then, similarly to the theorems above, we can prove the results which state the impossibility of global stabilization in a part of variables. For example, if for the system (\ref{cart}) there exists a closed cylinder $Z = B \times \mathbb{R}^2 \subset M$ such that $\partial B$ is a Jordan curve in the plane $(q,p)$ and any solution of system (\ref{cart}) starting in $\partial Z$ at $t \geqslant t_0$ locally belongs to $Z \setminus \partial Z$. Then there exists a solution which remains in $M \setminus Z$ on the interval of existence. In other words, we do not require the stability in the variables $x$ and $y$. We only assume that any solution starting in $Z$, will stay in this cylinder (w.r.t. to the variables $q$ and $p$). In particular, we do not assume that there exists a stable equilibrium in the system.

For the sake of simplicity of the exposition, in conditions L1 and L2, we consider a Lyapunov function that does not depend on time. However, taking into account the results on the existence of a Lyapunov function in a vicinity of an asymptotically stable equilibrium, it may be useful to consider more complex Lyapunov functions . In particular, let us consider system (\ref{cart}) and suppose that its right-hand side is $T$-periodic in $t$. Let $\mu$ be an asymptotically stable equilibrium. Then there exists a smooth $T$-periodic Lyapunov function $V(q,p,x,y,t)$ satisfying the conditions of the Lyapunov theorem on
asymptotic stability \cite{krasovskii1963stability} and we can prove that global stabilization of the asymptotically stable equilibrium cannot be obtained. The proof is based on the consideration of the $T$-periodic level set $V = \varepsilon$ of the Lyapunov function, for small $\varepsilon > 0$. The function $V$ is defined in a vicinity of the point $\mu$. Any solution starting in the set $V = \varepsilon$ belongs to the set $V < \varepsilon$ for all subsequent $t$. The proof can be obtained by using similar arguments as in Theorem \ref{th21}. Therefore, we have that for systems (\ref{main}), (\ref{sphere}) and (\ref{cart}), if they are periodic in $t$, global stabilization cannot be obtained.

In conclusion, note that the results presented in the paper can be generalized and developed in various ways. For instance, it is possible to apply the same ideas to systems with more than two degrees of freedom. Similar results can be proved for the case when we try to stabilize the system in a vicinity of a given trajectory, which may not be an equilibrium. Also, we can consider systems with dry or viscous friction and systems different from the inverted pendulum, e.g., the control system of a point moving on a surface which intersects the horizontal plane orthogonally. In all these, and many other cases, if the solutions depend continuously on initial data, the same topological obstructions to global stabilization appear and the above methods can be applied. However, these generalizations are out of the scope of this paper and will be developed elsewhere.

\section*{Acknowledgements}
This work was supported by the Russian Science Foundation under grant No. 14-50-00005.

\bibliographystyle{elsarticle-num}

\bibliography{sample}

\end{document}